\newtheorem{theorem}{Theorem}[section]
\newtheorem{lemma}[theorem]{Lemma}
\theoremstyle{remark}
\numberwithin{equation}{section}
\newcommand{\Fcal}{\mathscr{F}}
\newcommand{\Lcal}{\mathscr{L}}
\newcommand{\Pro}{\mathbb{P}}
\newcommand{\Z}{\mathbb{Z}}
\newcommand{\F}{\mathbb{F}}
\newcommand{\Q}{\mathbb{Q}}
\newcommand{\pole}{\mathrm{pole}}
\newcommand{\Proj}{\mathrm{Proj}}
  \DeclareFontFamily{U}{wncy}{}
    \DeclareFontShape{U}{wncy}{m}{n}{<->wncyr10}{}
    \DeclareSymbolFont{mcy}{U}{wncy}{m}{n}
    \DeclareMathSymbol{\Sha}{\mathord}{mcy}{"58}
\begin{document}
\title[]{Some families of polynomials satisfying uniform boundedness for rational preperiodic points}

\author{Hector Pasten}
\address{ Departamento de Matem\'aticas,
Pontificia Universidad Cat\'olica de Chile.
Facultad de Matem\'aticas,
4860 Av.\ Vicu\~na Mackenna,
Macul, RM, Chile}
\email[H. Pasten]{hector.pasten@uc.cl}%

\thanks{Supported by ANID Fondecyt Regular grant 1230507 from Chile.}
\date{\today}
\subjclass[2020]{Primary: 37P35; Secondary: 37P05, 37P15} %
\keywords{Morton--Silverman conjecture, Uniform boundedness, periodic and preperiodic points}%

\begin{abstract} We construct explicit non-isotrivial families of polynomials over $\mathbb{Q}$ satisfying uniform boundedness for their rational preperiodic points.
\end{abstract}

\maketitle



\section{Introduction}

In what follows, for simplicity we keep our discussion over $\Q$. 

Given a collection of rational functions $\Fcal\subseteq \Q(z)$ we say that $\Fcal$ has the \emph{UB property} (for uniform boundedness) if there is a bound $B$ depending only on $\Fcal$ such that for every $f\in \Fcal$, the dynamical system determined by $f$ on $\Pro^1(\Q)$ has at most $B$ rational preperiodic points. Morton and Silverman \cite{MortonSilverman} conjectured that if $d\ge 2$ and $\Fcal\subseteq\Q(z)$ is the family of all rational functions of degree $d$, then $\Fcal$ has the UB property. The problem is wide open and even the case of (non-isotrivial) one-parameter families such as
\begin{equation}\label{EqnQuadratic}
\{z^2+c : c\in \Q\}
\end{equation}
is very difficult. See \cite{Poonen} for a detailed study of this quadratic family, and see \cite{DoyleFaber} for an overview of results on the Morton--Silverman conjecture. We remark that it is now known \cite{Looper} that Vojta's conjecture with truncated counting functions implies the UB property for the family \eqref{EqnQuadratic}.

 Nevertheless, non-isotrivial one-parameter families satisfying the UB property have been unconditionally constructed in \cite{Ingram} using finiteness results from Diophantine Geometry. In this note we provide a further construction which mainly uses the Chebotarev density theorem in addition to results in Arithmetic Dynamics.

We write $\Pro^1=\Proj\, \Q[x,y]$ for the projective line over $\Q$. A rational function $\psi\in \Q(t)$ defines an element of the function field of  $\Pro^1$ via the change of variables $t=y/x$ and, therefore, it has a divisor of poles $\pole(\psi)$ defined over $\Q$ on $\Pro^1$ (although the geometric points in the support of $\pole(\psi)$ don't need to be rational).
\begin{theorem}\label{ThmMain} Let $d\ge 2$ and let $\psi\in \Q(t)$ be a rational function satisfying that the support of $\pole(\psi)$ is irreducible on $\Pro^1$ with at least two geometric points. Then the family
$$
\Fcal_{d,\psi} = \{z^d + \psi(c) : c\in \Q\}
$$
has the UB property.
\end{theorem}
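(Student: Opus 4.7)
The plan is to use the Chebotarev density theorem to exhibit a prime $\ell$ of good reduction common to every $f_c = z^d + \psi(c) \in \Fcal_{d,\psi}$, and then to invoke quantitative results from arithmetic dynamics at this prime.

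Write $\psi = A/F^m$ with $F \in \Q[t]$ the monic irreducible polynomial defining $\supp\pole(\psi)$. By hypothesis $k := \deg F \geq 2$ and the support is disjoint from $\infty$, so $\deg A \leq km$. Let $L/\Q$ be the splitting field of $F$. Since $\Gal(L/\Q)$ acts transitively on the $k \geq 2$ roots of $F$, Jordan's theorem provides a fixed-point-free element, so by Chebotarev there is a positive density of unramified primes $\ell$ at which Frobenius is a derangement, equivalently $F \bmod \ell$ has no root in $\F_\ell$. After excluding finitely many exceptional primes (those dividing the leading coefficient or discriminant of $F$, or the denominators of the coefficients of $A$ and $F$), one checks using the homogenization $\tilde F(a,b) = b^k F(a/b)$ that $\tilde F(a,b) \not\equiv 0 \pmod \ell$ for every coprime pair $(a,b)$: if $\ell \nmid b$ this follows from $F$ having no root mod $\ell$; if $\ell \mid b$, then $\tilde F(a,0)$ equals the leading coefficient of $F$ times $a^k$ with $\ell \nmid a$. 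Combined with $\deg A \leq km$, this yields $v_\ell(\psi(c)) \geq 0$ for every $c \in \Q$, and thus every $f_c$ has good reduction at $\ell$.

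Fix such an $\ell$. By the Morton--Silverman period bound, sharpened by Zieve, the exact period of any $\Q$-rational periodic point of a monic degree-$d$ polynomial with good reduction at $\ell$ is bounded by a constant $P = P(d,\ell)$; since period-$n$ points are zeros of the degree-$d^n$ polynomial $f_c^n(z) - z$, the rational periodic points of $f_c$ number at most $\sum_{n \leq P} d^n$, uniformly in $c$. Good reduction forces rational preperiodic points to lie in $\Z_\ell$, and a uniform $\ell$-adic bound on the tail (preperiod) length of such points in $\Z_\ell$ — depending only on $d$ and $\ell$ — then gives $|\mathrm{PrePer}(f_c,\Q)| \leq C(d,\ell)$ for every $c \in \Q$, which is the UB property.

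The principal obstacle is this last step: while the period bound from a single good prime is classical, quantitative uniform control on the preperiod length from one prime is more delicate and requires $\ell$-adic analysis of preimage trees in the spirit of Benedetto and Canci. Should a clean single-prime statement be unavailable, the natural fallback is to extract several primes $\ell_1, \ldots, \ell_r$ jointly from the Chebotarev derangement set and combine the good-reduction constraints across their distinct residue characteristics to bound both the period and the preperiod simultaneously.
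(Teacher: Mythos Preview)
Your Chebotarev step is essentially the paper's Lemma: the paper writes $\psi(y/x)=G(x,y)/(A\cdot H(x,y)^n)$ with $H$ irreducible of degree $\ge 2$ and invokes Chebotarev (or Frobenius) to produce infinitely many primes $\ell$ at which $H\bmod\ell$ has no linear factor, hence $v_\ell(\psi(c))\ge 0$ for all $c\in\Q$. Your formulation via Jordan's derangement theorem is a perfectly good way to justify the same density statement. The paper then picks \emph{two} such primes $p,q$ and uses Corollary~B of Morton--Silverman to bound all rational exact periods by $N=(p^2-1)(q^2-1)$; your single-prime version with Zieve's refinement would also serve here.

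The genuine gap is exactly the one you flag as ``the principal obstacle'': there is no uniform $\ell$-adic bound on the tail length coming from good reduction at one prime, and your multi-prime fallback does not repair this. Good reduction at $\ell$ forces the \emph{reduced} orbit in $\F_\ell$ to have preperiod at most $\ell$, but reduction can collapse a long rational tail $\alpha_0\to\alpha_1\to\cdots\to\alpha_{m-1}$ onto a short one whenever some $\alpha_i\equiv\alpha_j\pmod\ell$ with $i<j$; working at finitely many primes $\ell_1,\dots,\ell_r$ only upgrades this to a congruence modulo $\ell_1\cdots\ell_r$, which still does not force $\alpha_i=\alpha_j$. The Benedetto/Canci results you allude to bound preperiodic orbits in terms of the set of \emph{bad} primes, which is not uniformly controlled here (different $c$'s have different bad primes), so they do not apply. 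The paper closes this step by a different and genuinely deep input: the theorem of Doyle--Poonen (Theorem~1.10 of their \emph{Compositio} paper) that, for the specific family $z^d+\alpha$ over a number field, a uniform bound on the exact periods of rational periodic points implies a uniform bound on the number of rational preperiodic points. That result rests on gonality growth of generalized dynatomic curves together with Faltings/Frey, not on local analysis, and it is the missing ingredient in your outline.
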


We remark the families provided by Theorem \ref{ThmMain} are different from the ones in \cite{Ingram}. Indeed, the constructions in \cite{Ingram} give non-isotrivial one-parameter families of polynomials in $\Q[z]$ where all but finitely many members have no rational preperiodic points other than the point at infinity. However, under the assumptions of Theorem \ref{ThmMain} it can actually happen that infinitely many members of $\Fcal_{d,\psi}$ have a rational preperiodic point other than the point at infinity. This is the case, for instance, when $d=2$ and $\psi(t)=2/(t^2+8)$; the polynomial $z^2+\psi(c)$ has an affine rational fixed point for all $c\in \Q$ satisfying that $c^2+8$ is a square, and there are infinitely many such values of $c$ because the quadric $x^2+8=y^2$ has a rational point (namely, $(x,y)=(1,3)$.)


\section{Proof of the result}

We keep the notation and assumptions of Theorem \ref{ThmMain}.
\begin{lemma}\label{Lemma1} There are infinitely many primes $\ell$ such that for every $c\in \Q$, the arithmetic dynamical system determined by $z^d+\psi(c)$ has good reduction at $\ell$. 
\end{lemma}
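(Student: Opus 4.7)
The plan is to reduce good reduction of $z^d+\psi(c)$ at $\ell$ to a purely geometric condition on $\psi$, and then extract infinitely many primes satisfying that condition by combining the Chebotarev density theorem with Jordan's theorem on derangements in transitive permutation groups.

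First I would write $f_c(z)=z^d+\psi(c)$ as the morphism $[x:y]\mapsto[x^d+\psi(c)y^d:y^d]$ of $\Pro^1_\Q$ and compute directly (via the Sylvester determinant, or by observing that $F=x^d+ay^d$ and $G=y^d$ can have no common projective zero) that $\mathrm{Res}(x^d+ay^d,\,y^d)=\pm 1$ for every $a$. Consequently $f_c$ has good reduction at $\ell$ iff its single coefficient $\psi(c)$ is $\ell$-integral, so it suffices to find infinitely many $\ell$ such that $\psi(c)\in\Z_{(\ell)}$ for \emph{every} $c\in\Q$. Fixing an integral model of $\psi$ over $\Z[1/S]$ for some finite set of bad primes $S$, for $\ell\notin S$ this integrality condition is equivalent to the reduced map $\psi\bmod\ell\colon\Pro^1_{\F_\ell}\to\Pro^1_{\F_\ell}$ having no $\F_\ell$-rational pole: if it had such a pole, any integer lift would give a counterexample $c\in\Z$ with $\psi(c)\notin\Z_{(\ell)}$; and conversely every $c\in\Q$ reduces to a point of $\Pro^1(\F_\ell)$ where $\psi\bmod\ell$ is defined, forcing $\psi(c)\in\Z_{(\ell)}$.

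The hypothesis now enters: the support of $\pole(\psi)$ is an irreducible closed subscheme of $\Pro^1_\Q$ with at least two geometric points, so it is not the rational point $\{\infty\}$ and is therefore cut out on $\A^1_\Q$ by an irreducible polynomial $R(t)\in\Q[t]$ of degree $n\ge 2$. Letting $L/\Q$ be the Galois closure of $\Q[t]/(R)$ and $G=\Gal(L/\Q)$, the group $G$ acts transitively on the $n$ roots of $R$. Jordan's theorem (every finite transitive permutation group of degree $\ge 2$ admits a derangement) supplies an element $\sigma\in G$ with no fixed point on the roots, and by Chebotarev a positive-density set of primes $\ell$ has $\Frob_\ell$ conjugate to $\sigma$ in $G$. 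Restricting to such $\ell$ that also lie outside $S$ and are unramified in $L$ (still an infinite set), $R\bmod\ell$ has no $\F_\ell$-rational roots, i.e.\ $\psi\bmod\ell$ has no $\F_\ell$-rational pole, completing the proof.

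The main technical point requiring care is the compatibility between the pole divisor of $\psi$ over $\Q$ and the pole divisor of its mod-$\ell$ reduction: one must check that, after discarding a finite set of bad primes (absorbed into $S$), the pole divisor of $\psi\bmod\ell$ on $\Pro^1_{\F_\ell}$ is cut out by the reduction of $R$, so that the Chebotarev condition on $R\bmod\ell$ indeed translates into the absence of $\F_\ell$-rational poles for $\psi\bmod\ell$.
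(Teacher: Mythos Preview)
Your proof is correct and follows essentially the same route as the paper: reduce good reduction of $z^d+\psi(c)$ at $\ell$ to $\ell$-integrality of $\psi(c)$, then use Chebotarev to produce infinitely many primes at which the (irreducible, degree $\ge 2$) denominator of $\psi$ has no $\F_\ell$-rational zero. The only cosmetic differences are that the paper works directly in homogeneous coordinates, writing $\psi(y/x)=G(x,y)/(A\cdot H(x,y)^n)$ and checking $\ell\nmid H(a,b)$ for $c=b/a$ (which cleanly handles the point at infinity and sidesteps the compatibility issue you flag), and that the paper cites Chebotarev/Frobenius without isolating the derangement step, whereas you make the appeal to Jordan's theorem explicit.
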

\begin{proof} Due to the assumptions on the support of $\pole(\psi)$, we can write 
\begin{equation}\label{EqnGH}
\psi(y/x)=\frac{G(x,y)}{A\cdot H(x,y)^n}
\end{equation}
for some non-zero integer $A$, homogeneous polynomials $G,H\in \Z[x,y]$ with $H$ irreducible over $\Z$ of degree at least $2$, and a positive integer $n$ such that $\deg G = n\deg H$. By the Chebotarev density theorem there is an infinite set $\Lcal$ of primes $\ell$ such that $H\bmod \ell$ has no linear factor in $\F_\ell[x,y]$, see \cite{LenstraStevenhagen} (in fact, for this application it would be enough to use a simpler theorem by Frobenius.) 

Take any $\ell\in \Lcal$ with $\ell\nmid A$. Given any $c\in \Q$, let us write $c=b/a$ with $a,b$ coprime integers and let us replace $x=a$ and $y=b$ in \eqref{EqnGH}. Reducing modulo $\ell$ we see that $\ell\nmid H(a,b)$ and, therefore, $\ell$ does not divide the denominator of $\psi(c)$. 

 Finally, we note that if $\alpha\in \Q$, the arithmetic dynamical system determined by $z^d+\alpha$ has good reduction precisely at the primes not dividing the denominator of $\alpha$. \end{proof}

Let us take $p$ and $q$ two different primes as the ones provided by Lemma \ref{Lemma1} and let $N=(p^2-1)(q^2-1)$. Corollary B in \cite{MortonSilverman} shows that for every $c\in \Q$, the (exact) period of any rational periodic point for $z^d+\psi(c)$ is at most $N$. Theorem \ref{ThmMain} now follows from the next result by Doyle and Poonen (cf. Theorem 1.10 in \cite{DoylePoonen}):

\begin{theorem} Let $d\ge 2$ and $N\ge1$ be positive integers. There is a bound $B$ depending only on $d$ and $N$ such that the following holds: Given any $\alpha\in \Q$ such that $z^d+\alpha$ has all its rational periodic points of exact period at most $N$, we have that $z^d+\alpha$ has at most $B$ rational preperiodic points.
\end{theorem}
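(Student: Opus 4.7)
The plan is to decompose each rational preperiodic point of $f_\alpha(z)=z^d+\alpha$ by its preperiodic type $(m,n)$, where $n$ is the period of the cycle eventually entered and $m\ge 0$ is minimal with $f_\alpha^m(P)$ periodic. The hypothesis then gives $n\le N$ for every rational preperiodic point. The argument reduces to three ingredients: (a) a uniform bound on the number of rational periodic points, (b) a uniform bound on the branching of the strict preimage tree hanging off each periodic point, and (c) a uniform bound on the depth $T=T(d,N)$ of that tree, valid for all admissible $\alpha$.

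Ingredients (a) and (b) are elementary. For (a), any rational periodic point of exact period $n$ is an affine root of $f_\alpha^n(z)-z$, a polynomial in $z$ of degree $d^n$; including the point at infinity, the number of rational periodic points is at most $B_0:=1+\sum_{n=1}^N d^n$. For (b), the rational preimages of a point $y\in\Q$ under $f_\alpha$ are the rational $d$-th roots of $y-\alpha$, of which there are at most two over $\Q$ (one when $d$ is odd, and at most $\pm\eta$ when $d$ is even with $y-\alpha$ a positive $d$-th power). Hence the set of strict rational preimages of each periodic point, organized by minimal depth, is a binary tree, so granting (c) the total count is at most $B:=B_0\cdot 2^{T+1}$, a bound depending only on $d$ and $N$.

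The main obstacle is (c). A natural route is through the generalized dynatomic curves $Y_1(m,n)$ parametrizing pairs $(\alpha,P)$ with $P$ of preperiodic type $(m,n)$ for $z^d+\alpha$; for fixed $n\le N$, the geometric genus and gonality of these curves grow with $m$, so Faltings' theorem gives finiteness of $Y_1(m,n)(\Q)$ for large $m$, and the delicate step is to assemble these statements into a uniform depth bound independent of $\alpha$. A more hands-on alternative is to exploit the telescoping identity $P_i^d=P_{i+1}-\alpha$ along a preimage chain $P_0\mapsto P_1\mapsto\cdots\mapsto P_T$, and then reduce modulo a prime $\ell$ of good reduction for $z^d+\alpha$ (any $\ell$ coprime to the denominator of $\alpha$): since the reduced preperiodic graph is contained in $\Pro^1(\F_\ell)$, which has only $\ell+1$ points, and the degrees of iterates of $f_\alpha$ grow as $d^k$, the depth of the lifted chain can be controlled in terms of $\ell$, $d$, and $N$. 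Either approach crucially uses the unicritical form $z^d+\alpha$ together with the period bound $n\le N$.
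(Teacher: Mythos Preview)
First, note that the paper does not itself prove this statement: it is quoted as Theorem~1.10 of Doyle--Poonen \cite{DoylePoonen} and used as a black box in the deduction of Theorem~\ref{ThmMain}. So the relevant comparison is with \cite{DoylePoonen}, not with anything in the present paper.

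Your reductions (a) and (b) are correct and standard; the entire content of the theorem is your step (c), a bound on the tail length $m$ that is uniform in $\alpha$, and neither of your two routes delivers it. In the first route, Faltings applied to $Y_1(m,n)$ for $m$ large gives only that $Y_1(m,n)(\Q)$ is \emph{finite}; it does not make it empty, nor does it prevent a sequence of rational points $(\alpha_m,P_m)\in Y_1(m,n)(\Q)$ with the $\alpha_m$ varying with $m$. Such a sequence would produce, for every $M$, some $\alpha\in\Q$ with a rational preperiodic point of tail length exceeding $M$, which is precisely what must be excluded; you flag this as ``delicate'' but provide no mechanism. In the second route, your prime $\ell$ is chosen coprime to the denominator of $\alpha$ and therefore depends on $\alpha$; any bound extracted from $|\Pro^1(\F_\ell)|=\ell+1$ then also depends on $\alpha$ and is not uniform. (In the paper's own application this issue evaporates because Lemma~\ref{Lemma1} manufactures a single prime $\ell$ valid for every $\alpha=\psi(c)$ simultaneously, but that uses the special shape of $\psi$ and is unavailable for arbitrary $\alpha\in\Q$.) The actual argument in \cite{DoylePoonen} shows that the gonality of $Y_1(m,n)$ tends to infinity as $m\to\infty$ for each fixed $n$, and then turns this gonality growth into the desired uniform tail bound; that passage is the substance of their paper and is not a consequence of Faltings' finiteness alone.
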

 

\section{Acknowledgments}

Supported by ANID Fondecyt Regular grant 1230507 from Chile. We thank Joseph Silverman and Patrick Ingram for comments on an earlier version of this manuscript.



\begin{thebibliography}{9}         


\bibitem{DoyleFaber} J. Doyle, X. Faber, \emph{New families satisfying the dynamical uniform boundedness principle over function fields}. Math. Ann. 388 (2024), no. 1, 985--1000.

\bibitem{DoylePoonen} J. Doyle, B. Poonen, \emph{Gonality of dynatomic curves and strong uniform boundedness of preperiodic points}.  Compos. Math. 156 (2020), no. 4, 733--743.


\bibitem{Ingram} P. Ingram, \emph{Canonical heights and preperiodic points for certain weighted homogeneous families of polynomials}.  Int. Math. Res. Not. IMRN 2019, no. 15, 4859--4879.

\bibitem{LenstraStevenhagen} H. Lenstra Jr., P. Stevenhagen, \emph{Chebotar\"ev and his density theorem}. Math. Intelligencer 18 (1996), no. 2, 26--37. 

\bibitem{Looper} N. Looper, \emph{Dynamical uniform boundedness and the $abc$-conjecture}. Invent. Math. 225 (2021), no. 1, 1--44.

\bibitem{MortonSilverman} P. Morton, J. Silverman, \emph{Rational periodic points of rational functions}. Internat. Math. Res. Notices 1994, no. 2, 97--110.

\bibitem{Poonen} B. Poonen, \emph{The classification of rational preperiodic points of quadratic polynomials over $\mathbb{Q}$: a refined conjecture}. Math. Z. 228 (1998), no. 1, 11--29.

\end{thebibliography}
\end{document}